\titleformat{\subsection}[runin]         
{\normalfont\bfseries}           		 
{\thesubsection}                       
{.5em}                                  
{}[. \,]                     			 
\titlespacing*{\subsection}{0pt}{*1}{0pt} 
\newcommand{\AuthorInfo}{
	Martin de Borbon \\
	Department of Mathematics, Schofield Building \\
	Loughborough University, Epinal Way, LE11 3TU, UK \\
	\texttt{m.de-borbon@lboro.ac.uk}
}
\theoremstyle{plain}
\newtheorem{theorem}{Theorem}[section]
\newtheorem{lemma}[theorem]{Lemma}
\newtheorem{proposition}[theorem]{Proposition}
\newtheorem{corollary}[theorem]{Corollary}
\theoremstyle{definition}
\newtheorem{definition}[theorem]{Definition}
\newtheorem{remark}[theorem]{Remark}
\newtheorem{example}[theorem]{Example}
\newtheorem{claim}{Claim}
\newcommand{\C}{\mathbb{C}}
\renewcommand{\P}{\mathbb{P}}
\newcommand{\CP}{\mathbb{CP}}
\newcommand{\mO}{\mathcal{O}}
\newcommand{\tn}{\widetilde{\nabla}}
\newcommand{\bz}{\bar{z}}
\newcommand{\bi}{\bar{\imath}}
\newcommand{\bj}{\bar{\jmath}}
\newcommand{\bk}{\bar{k}}
\newcommand{\bl}{\bar{\ell}}
\newcommand{\bp}{\bar{p}}
\newcommand{\bq}{\bar{q}}
\DeclareMathOperator{\End}{End}
\newcommand{\ddb}{\p\op}
\newcommand{\newpar}{\vspace{.5em}}
\title{Local classification of K\"ahler metrics with constant holomorphic sectional curvature}
\author{Martin de Borbon}
\newcommand{\p}{\partial}
\newcommand{\op}{\bar{\partial}}
\begin{document}

\maketitle

\begin{abstract}
	We prove the local classification of K\"ahler metrics with constant holomorphic sectional curvature by exploiting the geometry of the bundle of \(1\)-jets of holomorphic functions.
\end{abstract}

\section{Introduction}

Let \(\varphi\) be a smooth real function on the unit ball \(B \subset \C^n\) such that \(\omega = i \ddb \varphi\) is a K\"ahler metric. Let \(J^1(\mO)\) be the bundle of \(1\)-jets of holomorphic functions on \(B\).
In this paper, we introduce two Hermitian forms \(H\) and \(K\) on \(J^1(\mO)\) naturally associated with \(\varphi\).
The Hermitian form \(H\) is positive definite, while \(K\) has signature \((1,n)\). We show that \(H\) (respectively \(K\)) is flat if and only if \(\omega\) has constant holomorphic sectional curvature \(\kappa = 2\) (respectively \(-2\)).
As an application, we prove the classical local classification of K\"ahler metrics with constant holomorphic sectional curvature (chsc).

\begin{theorem}\label{thm}
	\textup{(i)}
	If \(\omega\) has chsc \(\kappa=2\) then we can find holomorphic coordinates \(z = (z^1, \ldots, z^n)\) such that \(\omega = i \ddb \log (1+|z|^2)\).
	\textup{(ii)} If \(\omega\) has chsc \(\kappa=-2\) then we can find holomorphic coordinates such that \(\omega = -i \ddb \log (1-|z|^2)\).
\end{theorem}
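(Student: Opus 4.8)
The plan is to combine the flatness criterion proved above with the simple connectivity of \(B\). Consider first case (i). If \(\omega\) has chsc \(\kappa = 2\), the earlier result gives that the Chern connection of \((J^1(\mO), H)\) has vanishing curvature. I would then invoke the standard fact that a Hermitian holomorphic vector bundle with flat Chern connection over a simply connected domain is holomorphically trivial, with \(H\) represented by a constant matrix: parallel transport is monodromy-free, so there is a global frame \(s_0, \ldots, s_n\) of \(J^1(\mO)\) parallel for the Chern connection. Since the connection is the Chern connection we have \(\nabla^{0,1} = \op\), so each \(s_\alpha\) is automatically holomorphic; and since parallel transport preserves \(H\), normalising at one point makes the frame \(H\)-orthonormal everywhere, \(H(s_\alpha, s_\beta) = \delta_{\alpha\beta}\).

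Next I would read off holomorphic functions through the tautological structure of the jet bundle, namely the canonical evaluation \(p \colon J^1(\mO) \to \mO\). Setting \(f_\alpha = p(s_\alpha)\) yields \(n+1\) holomorphic functions on \(B\) (holomorphic because \(p\) is a holomorphic bundle map and each \(s_\alpha\) is holomorphic). The crux of the argument --- and the step I expect to be the main obstacle --- is to convert the orthonormality relations \(H(s_\alpha, s_\beta) = \delta_{\alpha\beta}\) into the single scalar identity \(e^{\varphi} = \sum_{\alpha = 0}^{n} |f_\alpha|^2\). This requires unwinding the explicit definition of \(H\) in terms of \(\varphi\), using that the parallel sections \(s_\alpha\) are genuine \(1\)-jets, so that their derivative components are exactly \(\p f_\alpha\) and the \(\varphi\)-twist built into \(H\) matches the sum of squares once the derivative terms are accounted for.

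Granting this identity, the normal form follows quickly. As \(\sum_\alpha |f_\alpha|^2 = e^{\varphi} > 0\), the functions \(f_\alpha\) have no common zero; after a constant unitary change of the frame (which preserves both parallelism and \(H\)-orthonormality) I may assume \(f_0 \neq 0\) near a chosen base point, and set \(w^i = f_i / f_0\) on the locus \(f_0 \neq 0\). Then \(e^{\varphi} = |f_0|^2 (1 + |w|^2)\), so \(\varphi = \log(1 + |w|^2) + \log |f_0|^2\), and since \(i\ddb \log|f_0|^2 = 0\) we obtain \(\omega = i\ddb \log(1 + |w|^2)\). The functions \(w = (w^1, \ldots, w^n)\) are genuine holomorphic coordinates precisely because \(\omega\) is nondegenerate: they realise \(\omega\) as the pullback of the Fubini--Study metric under \([f_0 : \cdots : f_n] \colon B \to \CP^n\), which must therefore be an immersion. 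Case (ii) runs identically, using flatness of \(K\) in place of \(H\): since \(K\) has constant signature \((1,n)\), Sylvester's law produces a parallel \(K\)-pseudo-orthonormal frame, the target identity becomes \(e^{-\varphi} = |f_0|^2 - \sum_{i=1}^n |f_i|^2\) (forcing \(f_0 \neq 0\) everywhere), and with \(w^i = f_i/f_0\) this reads \(e^{-\varphi} = |f_0|^2 (1 - |w|^2)\) with \(|w| < 1\) automatic, giving \(\omega = -i\ddb \log(1 - |w|^2)\) after discarding the pluriharmonic term \(\log|f_0|^2\).
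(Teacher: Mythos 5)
Your overall strategy is the paper's own: flatness of the Chern connection plus simple connectivity of \(B\) yields a parallel holomorphic \(H\)-orthonormal frame \(s_0,\dots,s_n\) of \(J^1(\mO)\), and the map \([f_0:\cdots:f_n]\colon B\to\CP^n\) built from the values \(f_\alpha\) pulls back the Fubini--Study metric to \(\omega\). The paper packages the last step as the statement \(\sigma^*\eta_{FS}=\omega\) for the section \(\sigma\) of \(\P(E^*)\) determined by \(\mO^*\subset E^*=J^1(\mO)^*\); in the constant orthonormal trivialization this is precisely your scalar identity \(e^{\varphi}=\sum_\alpha|f_\alpha|^2\). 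So the two proofs coincide, yours just unwinds the projective-bundle language.

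However, the step you yourself flag as the main obstacle is where your justification breaks down. You propose to derive \(e^{\varphi}=\sum_\alpha|f_\alpha|^2\) ``using that the parallel sections \(s_\alpha\) are genuine \(1\)-jets, so that their derivative components are exactly \(\p f_\alpha\).'' Two problems. First, a holomorphic (or parallel) section of \(J^1(\mO)\) need not be the \(1\)-jet of its image under the projection \(p\colon J^1(\mO)\to\mO\): the jet sequence does not identify sections of \(J^1(\mO)\) with integrable jets, and you never establish \(s_\alpha=j(f_\alpha)\). Second, even granting that, the relations \(H(s_\alpha,s_\beta)=\delta_{\alpha\beta}\) do not yield the identity by inspection --- summing the diagonal entries gives \(e^{-\varphi}\bigl(\sum_\alpha|f_\alpha|^2+\sum_\alpha|\nabla f_\alpha|^2_g\bigr)=n+1\), which is not what you need. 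The correct, integrability-free route is dual: for any \(H\)-orthonormal basis \(\{s_\alpha\}\) of a fibre and the evaluation functional \(\epsilon(j(u))=u(p)\) one has \(\sum_\alpha|\epsilon(s_\alpha)|^2=|\epsilon|^2_{H^{\vee}}\), and \(|\epsilon|^2_{H^{\vee}}=h^{-1}=e^{\varphi}\) because the projection \((J^1(\mO),H)\to(\mO,h)\) is by construction a fibrewise quotient isometry --- this is exactly the paper's Lemma \ref{lem:quotientisometry} and Corollary \ref{cor:restriction}. With that substitution (and its signature-\((1,n)\) analogue for \(K\)) your argument closes; the remaining points --- no common zero of the \(f_\alpha\), \(f\) an immersion because \(f^*\omega_{FS}=\omega>0\), and discarding the pluriharmonic term \(\log|f_0|^2\) --- are fine.
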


This theorem was first proved by Bochner \cite{bochner} under the assumption that the K\"ahler potential \(\varphi\) is real analytic. For smooth metrics, the result follows from the Cartan-Ambrose-Hicks Theorem \cite{kn, binxu}. 
The proof presented in this paper can be extended to yield an analogous local classification result for chsc Kähler metrics with cone singularities along a smooth divisor, for cone angles in the range \((0,2\pi)\); the details of this extension will be given elsewhere.

\subsection{Acknowledgements}
I want to thank Dima Panov. Many of the ideas of this paper, including jet bundles, I learnt from him.

\section{General background}
In this section, we introduce background and notation on Hermitian forms and K\"ahler metrics which will be used throughout the paper.

\subsection{Hermitian forms}
Let \(V\) be a complex vector space. A Hermitian form is a map \(H: V \times V \to \C\) that is complex linear on the first argument and satisfies \(H(y,x) = \overline{H(x,y)}\). A Hermitian metric is a positive definite Hermitian form. The dual vector space \(V^*\) consists of all linear functions \(V \to \C\)
equipped with the obvious addition \((\alpha + \beta)(x) := \alpha(x) + \beta(x)\) and scalar multiplication \((\lambda \alpha)(x):= \lambda \cdot \alpha(x)\). The Hermitian form \(H\) is non-degenerate if the antilinear map \(b_H : V \to V^*\) given by \(b_H(x) = H(\cdot, x)\) is bijective. 

Let \(H\) be a non-degenerate Hermitian form. Then \(H\) induces a Hermitian form \(H^{\vee}\) on \(V^*\) given by
\[
H^{\vee}(\alpha, \beta) = H (y, x)  \,\, \text{ with } \,\, \alpha = b_H(x) \, \text{ and } \, \beta = b_H(y) \,.
\]
The reverse order is necessary to make \(H^{\vee}\) complex linear on the first argument.

We also denote by \(H = (H_{i\bj})\) the Hermitian matrix whose \((i,j)\)-entry is \(H_{i\bj} = H(e_i, e_j)\) for a given basis \(e_1, \ldots, e_n\) of \(V\). 
Let \(e^1, \ldots, e^n\) be the dual basis of \(V^*\) defined by \(e^i(e_j) = \delta^i_j\) and let \(H^{\vee} = (H^{i\bj})\) be the matrix whose \((i,j)\)-entry is  \(H^{i\bj} = H^{\vee}(e^i, e^j)\). It follows the definitions that
\[
H^{\vee} = (H^{-1})^t \,\,.
\]
If \(\dim V = 1\) we usually write \(h\) instead of \(H\) and \(h^{\vee} = h^{-1}\)\,.

\subsection{K\"ahler metrics}
Let \(g\) be a K\"ahler metric on a complex manifold \(M\). In local complex coordinates \((z^1, \ldots, z^n)\) the metric coefficients are given by
\[
g_{i \bj} = g \left( \p_i, \p_{\bj} \right) \,,
\] 
where \(\p_i = \p / \p z^i\) and \(\p_{\bj} = \p / \p \bz^j\). The K\"ahler form is \(\omega(u, v) = g(Iu, v)\) where \(I\) is the complex structure of \(M\). We also refer to \(\omega\) as the K\"ahler metric.
Locally, we can write \(\omega = i \ddb \varphi\) where \(\varphi\) is a smooth real function, called K\"ahler potential. In terms of a K\"ahler potential, the metric coefficients are given by
\[
g_{i\bj} = \p_i \p_{\bj} \varphi \,\,.
\]
The coefficients of the induced Hermitian metric on \(T^*M\) are given by
\[
g\left(dz^i , d\bz^j\right) = g^{i\bj} \,,
\]
where \((g^{i\bj})\) is the inverse transpose of \((g_{i\bj})\).


The K\"ahler metric \(g\) has constant holomorphic sectional curvature (chsc) equal to \(\kappa\) if the sectional curvature of every complex line in \(TM\) is equal to \(\kappa\). In local coordinates, this is equivalent to
\begin{equation}\label{eq:chsc}
	R_{i\bj k \bl} = \frac{\kappa}{2} \left(g_{i\bj} g_{k \bl} \,+\, g_{i\bl}g_{k\bj} \right) \,,
\end{equation}
where
\begin{equation}\label{eq:riemann}
R_{i\bj k \bl} = -\p_k \p_{\bl} g_{i\bj} \,+\, g^{p\bq} (\p_{\bl} g_{p\bj})  (\p_k g_{i\bq})	
\end{equation}
is the Riemann curvature tensor.

\subsection{Curvature of a Hermitian line bundle}
Let \(L\) be a holomorphic line bundle equipped with a Hermitian metric \(h\). 
The curvature form of \(h\) is the \((1,1)\)-form on the base manifold locally given by
\[
\omega_h = i \op \p \log h \,,
\]
where \(h\) is the norm squared of a non vanishing holomorphic section \(s\). 
In the local trivialization of \(L\) given by \(s\), the Chern connection of \(h\) is 
\[
\nabla = d + \p \log h 
\] 
and the curvature of \(\nabla\) is given by \(F_h = \op \p \log h\). 
The normalization \(\omega_h = i F_h\) makes the \((1,1)\)-form \(\omega_h\) real. 
If \(L^*\) is the dual line bundle equipped with the induced Hermitian metric \(h^{-1}\) then 
\[
\omega_{h^{-1}} = - \omega_h \,\,\,.
\]

\subsection{Fubini-Study metric}
The tautological line bundle \(\mO(-1)\) on \(\CP^n\) inherits a Hermitian metric \(h\) given by restriction of the Euclidean metric on \({\C}^{n+1}\). The Fubini-Study metric \(\omega_{FS}\) is minus the curvature form of \(h\). 
In local coordinates \(z =(z^1, \ldots, z^n) \mapsto [1, z^1, \ldots, z^n]\) the norm squared of the section \(s(z) = (1,z)\) of \(\mO(-1)\) is equal to \(1+|z|^2\). Using that  \(-\op \p = \ddb\), we have
\[
\omega_{FS} = i \ddb \log \left( 1 + |z|^2\right) \,.
\]
The Fubini-Study metric \(\omega_{FS}\) has chsc \(\kappa = 2\).

\subsection{Complex hyperbolic metric}
Let \(\C^{1,n}\) denote \(\C^{n+1}\) endowed with the Hermitian form of signature \((1,n)\) given by \(K = |dx^0|^2 - \sum_{i=1}^{n} |dx^i|^2\). Let \(B \subset \CP^n\) be the set of all complex lines \(L \subset \C^{1, n}\) such that \(K\) restricted to \(L\) is positive.
The line bundle \(\mO(-1)|_B\) inherits a Hermitian metric \(k\) given by restriction of \(K\). The complex hyperbolic metric \(\omega_{CH}\) is the curvature form of \(k\). In local coordinates \(z \mapsto [1, z]\) we have \(B = \{|z|< 1\}\) and
\[
\omega_{CH} = - i \ddb \log \left( 1 - |z|^2\right) \,.
\]
The complex hyperbolic metric \(\omega_{CH}\) has chsc \(\kappa = -2\).

\section{Relative Fubini-Study form}

In this section, we recall the construction of the relative Fubini-Study (and complex hyperbolic) form on the projectivization of a Hermitian vector bundle.
Throughout this section, we let \(M\) be a complex manifold and let \(E \to M\) be a holomorphic vector bundle of rank \(r+1\).


\subsection{Projective bundle}
The projectivization of \(E\) is the \(\CP^r\)-bundle 
\[
\P(E) \xrightarrow{\pi} M
\]  
whose fibre at \(p \in M\) consists of complex \(1\)-dimensional linear subspaces \(\ell \subset E_p\).

A good reason to introduce \(\P(E)\) is that it allows us to describe line subbundles of \(E\) in more geometric terms as sections of \(\P(E)\). 
In this paper, by line subbundle we mean vector subbundle of rank \(1\). 
We record this observation as the following evident result.

\begin{lemma}
	There is a natural correspondence between line subbundles \(L \subset E\) and holomorphic sections \(\sigma: M \to \P(E)\).
\end{lemma}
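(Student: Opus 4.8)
The plan is to construct the correspondence explicitly in both directions and then verify that the two constructions are mutually inverse, with the only real content being the holomorphicity of the resulting objects.

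First, given a line subbundle \(L \subset E\), I would define a section \(\sigma_L\) by sending \(p \in M\) to the point \(\sigma_L(p) \in \P(E_p)\) represented by the line \(L_p \subset E_p\); this is a set-theoretic section of \(\pi\). To see that it is holomorphic, I would work locally: over a small open set \(U\) the subbundle \(L\) admits a nowhere-vanishing holomorphic section \(s: U \to E\), and in a local trivialization of \(E\) over \(U\) the composite of \(\sigma_L\) with the induced chart on \(\P(E)|_U \cong U \times \CP^r\) is simply \(p \mapsto (p, [s(p)])\). Since \(s\) is holomorphic and non-vanishing, its projectivization \([s]\) is holomorphic, so \(\sigma_L\) is holomorphic.

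For the reverse direction, given a holomorphic section \(\sigma: M \to \P(E)\), the cleanest construction uses the tautological line subbundle \(\mO_{\P(E)}(-1) \hookrightarrow \pi^* E\), whose fibre over a point \(\ell \in \P(E_p)\) is the line \(\ell \subset E_p\) itself. Because \(\sigma\) is a section, \(\pi \circ \sigma = \mathrm{id}_M\), so pulling back this inclusion by \(\sigma\) yields a holomorphic line subbundle \(L_\sigma := \sigma^* \mO_{\P(E)}(-1) \hookrightarrow \sigma^* \pi^* E = E\). By construction the fibre of \(L_\sigma\) at \(p\) is exactly the line \(\sigma(p) \subset E_p\).

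It then remains to check that these two assignments are inverse to one another, which is immediate from the descriptions of their fibres: starting from \(L\), the section \(\sigma_L\) has fibre \(L_p\) at \(p\), and pulling back the tautological bundle recovers \(L\); conversely, starting from \(\sigma\), the subbundle \(L_\sigma\) has fibre \(\sigma(p)\), whose associated section is \(\sigma\) again. The only step requiring care is the holomorphicity in each direction, and the use of a local holomorphic frame for \(L\) (respectively the tautological inclusion on \(\P(E)\)) handles both cleanly; I do not expect any essential obstacle, which is consistent with the result being labelled evident.
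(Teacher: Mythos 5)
Your proof is correct; the paper itself gives no argument, simply labelling the lemma evident, and your write-up supplies the standard details (projectivizing a local non-vanishing holomorphic section of \(L\) in one direction, pulling back the tautological subbundle \(\mO_{\P(E)}(-1) \subset \pi^*E\) in the other) in the way the paper's subsequent use of the tautological bundle implicitly presupposes. No gaps.
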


The tautological line bundle \(\mO_{\P(E)}(-1)\) is the line bundle on \(\P(E)\) whose fibre at \(\ell \subset E_p\) is \(\ell\). 

\subsection{Relative Fubini-Study form}
Suppose that \(E\) is endowed with a Hermitian metric \(H\). Then the line bundle \(\mO_{\P(E)}(-1)\) inherits a Hermitian metric \(h\) given by restriction of \(H\). 
The relative Fubini-Study form \(\eta_{FS}\) associated to \(H\) is the real \((1,1)\)-form on \(\P(E)\) defined as minus the curvature form of \(h\). 

More explicitly, let \(s_0, \ldots, s_r\) be a local holomorphic frame of \(E\) and let \(H_{i\bj} = H(s_i, s_j)\). In local coordinates  \((z,x) \mapsto [s_0(z) \,+\, x^i s_i(z)]\), where \(z\) are coordinates on the base manifold  and \(x = (x^1, \ldots, x^r)\) are coordinates on the \(\CP^r\) fibres, we have
\[
\eta_{FS} = i \ddb \log \left( H_{0\bar{0}} + \bar{x}^i H_{0\bi} + x^i H_{i\bar{0}} + x^i \bar{x}^j H_{i\bj} \right) \,.
\]

The restriction of \(\eta_{FS}\) to the fibres \(\cong \CP^r\) is positive and isometric to the Fubini-Study metric \(\omega_{FS}\). 
In general,  the form \(\eta_{FS}\) is not positive in directions transverse to the fibres of \(\P(E)\).
The relative Fubini-Study form has the following property.

\begin{lemma}\label{lem:pbfs}
	Let \(\eta_{FS}\) be the relative Fubini-Study form on \(\P(E)\) associated to the Hermitian metric \(H\).
	Let \(L \subset E\) be a line subbundle and let \(\sigma: M \to \P(E)\) be the section corresponding to \(L\). Then 
	\[
	\sigma^* \eta_{FS} = - \omega_h \,\,,
	\] 
	where \(\omega_h\) is the curvature form of the Hermitian metric \(h = H|_L\) .
\end{lemma}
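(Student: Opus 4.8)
The plan is to identify the pullback under \(\sigma\) of the metrized tautological bundle \((\mO_{\P(E)}(-1), h)\) with the metrized line subbundle \((L, H|_L)\), and then to invoke the naturality of the Chern curvature under holomorphic pullback. First I would use the correspondence between line subbundles and holomorphic sections of \(\P(E)\): the section \(\sigma\) sends a point \(p \in M\) to the line \(\ell_p = L_p \subset E_p\). Since the fibre of \(\mO_{\P(E)}(-1)\) over a point \([\ell] \in \P(E)\) is the line \(\ell\) itself, the fibre of \(\sigma^* \mO_{\P(E)}(-1)\) over \(p\) is exactly \(L_p\), so \(\sigma^* \mO_{\P(E)}(-1) \cong L\) as holomorphic line bundles on \(M\). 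Moreover \(h\) is by definition the restriction of \(H\) to the tautological fibre, and over \(\sigma(p)\) this fibre is \(L_p\); hence the pulled-back metric \(\sigma^* h\) coincides with \(H|_L\).

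With this identification in hand, I would pass to curvatures. By definition \(\eta_{FS} = -\omega_h\), so \(\sigma^* \eta_{FS} = -\sigma^* \omega_h\). The curvature form is computed from the norm squared of a local nonvanishing holomorphic section \(t\) of \(\mO_{\P(E)}(-1)\) via \(\omega_h = i \op \p \log H(t,t)\). Because \(\sigma\) is holomorphic, \(\sigma^* t\) is a nonvanishing holomorphic section of \(L\), the operators \(\p\) and \(\op\) commute with pullback by \(\sigma\), and \(\sigma^*\bigl(H(t,t)\bigr) = (H|_L)(\sigma^* t, \sigma^* t)\) is precisely the norm squared of \(\sigma^* t\) with respect to \(h = H|_L\). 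Therefore \(\sigma^* \omega_h = i \op \p \log (H|_L)(\sigma^* t, \sigma^* t)\) is exactly the curvature form of \(h = H|_L\) on \(M\), giving \(\sigma^* \eta_{FS} = -\omega_h\) as claimed. Concretely, in the coordinates \((z,x)\) of the excerpt the tautological section is \(t(z,x) = s_0(z) + x^i s_i(z)\); writing \(\sigma(z) = (z, x(z))\) with \(x(z)\) holomorphic turns the argument of the logarithm into the norm squared of the holomorphic section \(e(z) = s_0(z) + x^i(z) s_i(z)\) of \(L\), so that \(\sigma^* \eta_{FS} = i \ddb \log (H|_L)(e,e)\).

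The only point that genuinely requires care is the clean identification of the Hermitian holomorphic bundle \((\sigma^* \mO_{\P(E)}(-1), \sigma^* h)\) with \((L, H|_L)\), together with the routine but essential fact that the Chern curvature is natural under holomorphic pullback. Once these are established the statement is immediate: there is no analytic difficulty, since the computation reduces to the commutation of \(\ddb\) with the holomorphic map \(\sigma\) applied to the single positive function giving the metric on the fibre. A minor technical remark is that \(\omega_h\) is defined through a local nonvanishing holomorphic section, which always exists for a line bundle; the final form \(-\omega_h\) is independent of this choice, as it must be.
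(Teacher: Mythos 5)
Your proof is correct and follows essentially the same route as the paper: pull back a local nonvanishing holomorphic section \(t\) of \(\mO_{\P(E)}(-1)\) along \(\sigma\) to obtain a nonvanishing holomorphic section of \(L\), and use that \(\op\p\log\hm^2\) commutes with pullback by the holomorphic map \(\sigma\). The paper's proof is just a more condensed version of this, leaving implicit the identification \(\sigma^*\mO_{\P(E)}(-1)\cong L\) that you spell out.
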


\begin{proof}
	Let \(t\) be a non-zero section of \(\mO_{\P(E)}(-1)\) so that \(\eta_{FS} = - i \op \p \log |t|_H^2\). Then \(s = t \circ \sigma\) is a section of \(L\) and
	\[
	\sigma^* \eta_{FS} = -i \op \p  \log |t \circ \sigma|^2_H = - i \op \p \log |s|_h^2 = -\omega_h
	\]
	as wanted.
\end{proof}

\subsection{Relative complex hyperbolic form}

Suppose now that \(E\) has a Hermitian form \(K\) of signature \((1,r)\). Let \(U_{+} \subset \P(E)\) the set of lines where \(K > 0\). Then the line bundle \(\mO_{\P(E)}(-1)\) inherits a Hermitian metric \(k\) on the open set \(U_{+}\) given by restriction of \(K\).
The relative complex hyperbolic form \(\eta_{CH}\) associated to \(K\) is the \((1,1)\)-form on \(U_{+}\) defined as the curvature form of \(k\). 

The restriction of \(\eta_{CH}\) to the fibres of \(U_{+} \to M\) is the complex hyperbolic metric \(\omega_{CH}\) but in general, the form \(\eta_{CH}\) might not be positive in directions transverse to the fibres.

\begin{lemma}\label{lem:pbch}
	Let \(K\) be a Hermitian form on \(E\) of signature \((1,r)\)
	and let \(\eta_{CH}\) be the relative complex hyperbolic form on \(U_{+}\) associated to \(K\).
	Let \(L \subset E\) be a line subbundle such that \(K|_L > 0\) and let \(\tau: M \to U_{+}\) be the section corresponding to \(L\). Then 
	\[
	\tau^* \eta_{CH} = \omega_k \,\,,
	\] 
	where \(\omega_k\) is the curvature form of the Hermitian metric \(k = K|_L\) . 
\end{lemma}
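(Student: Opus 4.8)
The plan is to mirror the proof of Lemma~\ref{lem:pbfs} almost verbatim, the only substantive differences being the restriction to the open set \(U_{+}\) and the sign convention distinguishing the relative complex hyperbolic form from the relative Fubini-Study form. Recall that \(\eta_{CH}\) is defined on \(U_{+}\) as the curvature form of \(k\), so by definition \(\eta_{CH} = i \op \p \log |t|^2_K\) for any local non-vanishing section \(t\) of \(\mO_{\P(E)}(-1)\) taking values in \(U_{+}\).

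First I would fix such a local section \(t\) of \(\mO_{\P(E)}(-1)\) over a neighbourhood in \(U_{+}\), so that \(\eta_{CH} = i \op \p \log |t|^2_K\). Next I would use the correspondence from the preceding lemma: the section \(\tau\) corresponding to \(L\) satisfies \(\tau(p) = L_p \subset E_p\), and composing the tautological section with \(\tau\) produces a local section \(s = t \circ \tau\) of \(L\). The hypothesis \(K|_L > 0\) guarantees that \(\tau\) indeed lands in \(U_{+}\), so the pullback is well-defined and \(s\) is a genuine local frame for \(L\) on which \(k = K|_L\) is positive.

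The computation is then a one-line pullback: since pullback commutes with \(\op\) and \(\p\), and \(|t \circ \tau|^2_K = |s|^2_k\) because \(k\) is by definition the restriction of \(K\) to \(L\), I obtain
\[
\tau^* \eta_{CH} = i \op \p \log |t \circ \tau|^2_K = i \op \p \log |s|^2_k = \omega_k \,,
\]
where the last equality is the definition of the curvature form \(\omega_k = i \op \p \log h\) applied to \(h = k\) with non-vanishing section \(s\). This matches the asserted sign, in contrast to the minus sign in Lemma~\ref{lem:pbfs}, precisely because \(\eta_{CH}\) is defined as \(+\) the curvature of \(k\) whereas \(\eta_{FS}\) is defined as \(-\) the curvature of \(h\).

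There is no real obstacle here; the statement is essentially formal once the definitions are unwound. The only point demanding a moment's care is checking that the section \(\tau\) takes values in the domain \(U_{+}\) where \(\eta_{CH}\) is defined, which is exactly the content of the hypothesis \(K|_L > 0\); without it the pullback \(\tau^* \eta_{CH}\) would be meaningless. I would therefore state explicitly that this hypothesis is what places the image of \(\tau\) inside \(U_{+}\), and then present the short display above.
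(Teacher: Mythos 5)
Your proof is correct and is essentially identical to the paper's: the paper proves Lemma \ref{lem:pbch} by the remark ``Same as Lemma \ref{lem:pbfs}'', and your argument is exactly that proof with the sign adjusted for the convention that \(\eta_{CH}\) is \(+\) the curvature form of \(k\). Your explicit remark that the hypothesis \(K|_L > 0\) is what places the image of \(\tau\) in \(U_{+}\) is a useful point the paper leaves implicit.
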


\begin{proof}
	Same as Lemma \ref{lem:pbfs}.
\end{proof}


\section{Hermitian forms on \(1\)-jets}\label{sec:HK}
We now turn to the construction of Hermitian forms on the bundle of \(1\)-jets.
The construction resembles that of the Sasaki metric on the tangent bundle of a Riemannian manifold.

Throughout this section, we let \(L \to M\) be a holomorphic line bundle equipped with a Hermitian metric \(h\). We assume that the associated curvature form \(\omega = i \op \p \log h\) is positive, i.e., we assume that \(g(u, v) = \omega(u, Iv)\) is a K\"ahler metric on \(M\).
 
\subsection{Jet bundle}
Let \(J^1(L)\) be the bundle of \(1\)-jets of \(L\). The fibre of \(J^1(L)\) at \(p \in M\) consists of equivalence classes of germs of holomorphic sections of \(L\) around \(p\), where \(s \sim s'\) if \(s(p) = s'(p)\) and \(ds(p) = ds'(p)\). 
Write \(j(s)\) for the equivalence class of the germ \(s\).
The vector bundle \(J^1(L)\) has rank \(n+1\) and we have an exact sequence:
\begin{equation}\label{eq:jetseq}
	0 \to T^*M \otimes L \to J^1(L) \xrightarrow{\pi} L \to 0 \,,
\end{equation}
where \(\pi(j(s)) = s\).

\subsection{Hermitian metric on \(J^1(L)\)}
The connection \(\nabla = d + \p \log h\) gives a smooth splitting \(J^1(L) \cong L \oplus \big(T^*M \otimes L \big)\) of the sequence \eqref{eq:jetseq}. Using the Hermitian metrics \(h\) and \(g \otimes h\) on the summands (where \(g\) is the induced metric on \(T^*M\)) we introduce the following.

\begin{definition}\label{def:H}
	Let \(H\) be the Hermitian metric on \(J^1(L)\) given by
	\begin{equation}\label{eq:metricJ1}
	| j(s) |_H^2 = |s|_h^2 \,+\, |\nabla s|_{g\otimes h}^2 \,\,.
	\end{equation}
	We refer to \(H\) as the Hermitian metric on \(J^1(L)\) induced by \(h\). 
\end{definition}


\begin{remark}
	A closely related construction, in the context of K\"ahler-Einstein metrics with positive Ricci curvature is presented in \cite[\S 2]{tian} and \cite[\S 3]{chili}.
\end{remark}

\begin{example}
	Let \(M = \CP^1\) and let \(L = \mO(1)\) equipped with its standard Hermitian metric \(h\). Let \(x_1\), \(x_2\) be orthonormal linear coordinates on \(\C^2\), then \(j(x_1)\) and \(j(x_2)\) are pointwise linearly independent sections of \(J^1(\mO(1)) \cong \underline{\C}^2\). We claim that
	\[
	\begin{pmatrix}
	|j(x_1)|_H^2 & H \big(j(x_1), j(x_2)\big) \\
	H \big(j(x_2), j(x_1)\big) & |j(x_2)|_H^2 
	\end{pmatrix}
	\, = \,
	\begin{pmatrix}
	1 & 0 \\
	0 & 1
	\end{pmatrix} \,.
	\]
	
	To check, let \(U = \{[x_1, x_2] \in \CP^1 \,|\, x_1 \neq 0\}\) and take the local coordinate \(z \mapsto [1,z]\). Trivialize \(\mO(1)\) over \(U\) by the section \(s = x_1\) so \(h = (1+|z|^2)^{-1}\).
	Then \(\omega = h^2 i dz \wedge d\bar{z}\) from which we get \(|dz|^2_{g\otimes h} = h^{-1}\). Using that \(\nabla 1 = \p \log h = -\bz h dz\), we obtain
	\[
	|j(1)|_H^2 = |1|^2_h \,+\, |\nabla 1|^2_{g\otimes h} = h + |z|^2 h = 1 \,.
	\]
	Similarly, \(|j(z)|_H^2 = 1\) and \(H \big(j(x_1), j(x_2)\big)= 0\).
\end{example}

\subsection{Hermitian form of signature \((1,n)\) on \(J^1(L^*)\)}
The Hermitian metric \(h\) on \(L\) induces a Hermitian metric \(h^{-1}\) on the dual line bundle \(L^*\). The connection \(\nabla^{*} = d + \log h^{-1}\) gives a smooth splitting of the sequence
\begin{equation}\label{eq:jetseq2}
	0 \to T^*M \otimes L^* \to J^1(L^*) \to L^* \to 0 \,.
\end{equation}
Using the Hermitian metrics on the summands, we introduce the following.

\begin{definition}\label{def:K}
	The Hermitian form \(K\) on \(J^1(L^*)\) is given by
	\begin{equation}
		|j(t)|^2_K = |t|^2_{h^{-1}} \,-\, |\nabla^*t|_{g \otimes h^{-1}}^2 \,\,\,.
	\end{equation}
	We refer to \(K\) as the Hermitian form of signature \((1,n)\) on \(J^1(L^*)\) induced by \(h\).
\end{definition}

\begin{remark}
	A closely related construction, in the context of K\"ahler-Einstein metrics with negative Ricci curvature is presented in \cite[p. 122]{hitchin} for Riemann surfaces and in higher dimensions by \cite[\S 9]{simpson}.
\end{remark}

\begin{example}
	Let \(B\) be the unit disc \(\{[1,z] \in \CP^1 \,|\, |z| < 1\}\) and let \(L = \mO(-1)|_B\) equipped with the Hermitian metric \(h = 1 - |z|^2\) induced from the constant Hermitian form \(|dx_1|^2 - |dx_2|^2\) on \(\C^2\).
	Then \(\omega = -i \ddb \log h\) is the hyperbolic metric of curvature \(-2\). An easy calculation shows that
	\[
	\begin{pmatrix}
	|j(x_1)|_{K}^2 & K \big(j(x_1), j(x_2)\big) \\
	K \big(j(x_2), j(x_1)\big) & |j(x_2)|_{K}^2 
	\end{pmatrix}
	\, = \,
	\begin{pmatrix}
	1 & 0 \\
	0 & -1
	\end{pmatrix} \,.
	\]
\end{example}

\section{Quotient isometry property}
The main result of this section is Proposition \ref{prop:pullback}. As we shall show, it follows directly from the fact that the projection maps 
\[
\big( J^1(L), H \big) \to \big(L, h\big) \text{ and } \big( J^1(L^*), K \big) \to \big(L^*, h^{-1}\big) 
\] 
are fibrewise quotient isometries. 

\subsection{Linear algebra background}
Let \(V\) and \(W\) be finite dimensional complex vector spaces equipped with non-degenerate Hermitian forms \(H\) and \(K\). Let \(f: V \to W\) be a surjective \(\C\)-linear map. We say that \(f\) is a \emph{quotient isometry} if \(V = \ker f \oplus (\ker f)^{\perp}\) and
\[
\forall x, y \in (\ker f)^{\perp}: \,\, K \big(f(x), f(y)\big) = H (x, y) \,\,.
\]
\begin{lemma}\label{lem:quotientisometry}
	Let \(f: V \to W\) be a quotient isometry. Then \(f^*: W^* \to V^*\) is an injective isometry with respect to the induced metrics \(K^{\vee}\) and \(H^{\vee}\).
\end{lemma}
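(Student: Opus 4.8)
The plan is to reduce the isometry statement to a single identity relating the pullback map \(f^*\) to the antilinear isomorphisms \(b_H\) and \(b_K\). First I would record that \(f^*\) is injective: since \(f\) is surjective, any \(\alpha \in W^*\) with \(f^*\alpha = \alpha \circ f = 0\) must vanish on \(\mathrm{im}\, f = W\), hence \(\alpha = 0\). Before computing norms I would also note that the direct sum decomposition \(V = \ker f \oplus (\ker f)^{\perp}\) (which is part of the hypothesis) forces \(f\) to restrict to a linear isomorphism \(f_1 := f|_{(\ker f)^{\perp}} : (\ker f)^{\perp} \xrightarrow{\sim} W\): it is injective because \((\ker f)^{\perp} \cap \ker f = 0\), and a dimension count using surjectivity of \(f\) gives \(\dim (\ker f)^{\perp} = \dim W\). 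Thus every \(p \in W\) has a unique preimage in \((\ker f)^{\perp}\).

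The key step is the following claim: if \(p \in W\) and \(x \in (\ker f)^{\perp}\) is the unique vector with \(f(x) = p\), then
\[
f^*\big(b_K(p)\big) = b_H(x) \,.
\]
To prove this I would evaluate both sides on an arbitrary \(v \in V\) and decompose \(v = v_0 + v_1\) with \(v_0 \in \ker f\) and \(v_1 \in (\ker f)^{\perp}\). On the left,
\[
f^*\big(b_K(p)\big)(v) = K\big(f(v), p\big) = K\big(f(v_1), f(x)\big) = H(v_1, x) \,,
\]
using \(f(v) = f(v_1)\) together with the quotient isometry property applied to \(v_1, x \in (\ker f)^{\perp}\). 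On the right, \(b_H(x)(v) = H(v,x) = H(v_0, x) + H(v_1, x) = H(v_1, x)\), since \(v_0 \in \ker f\) is \(H\)-orthogonal to \(x \in (\ker f)^{\perp}\). The two agree, which proves the claim.

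With the claim in hand the isometry is immediate from the definitions of the induced dual forms. Given \(\alpha, \beta \in W^*\), write \(\alpha = b_K(p)\) and \(\beta = b_K(q)\), and let \(x, y \in (\ker f)^{\perp}\) be the \(f_1\)-preimages of \(p, q\). By the claim \(f^*\alpha = b_H(x)\) and \(f^*\beta = b_H(y)\), so the definition of \(H^{\vee}\) gives \(H^{\vee}(f^*\alpha, f^*\beta) = H(y, x)\). On the other hand, the definition of \(K^{\vee}\) gives \(K^{\vee}(\alpha, \beta) = K(q, p)\), and \(K(q,p) = K\big(f(y), f(x)\big) = H(y,x)\) by the quotient isometry property. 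Hence \(H^{\vee}(f^*\alpha, f^*\beta) = K^{\vee}(\alpha,\beta)\), as desired.

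I expect the only real subtlety to be bookkeeping rather than any genuine obstacle: one must keep straight the antilinearity of \(b_H\) and \(b_K\) and the reversed-order convention built into the definitions of \(H^{\vee}\) and \(K^{\vee}\), and one must invoke the hypothesis that the splitting \(V = \ker f \oplus (\ker f)^{\perp}\) is orthogonal (so that \(f_1\) is an isomorphism and \(x\) is well-defined, and so that \(H\) restricts non-degenerately to \((\ker f)^{\perp}\)). Everything else is a direct unwinding of definitions.
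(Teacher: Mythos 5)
Your proof is correct and is essentially the same argument as the paper's: your key claim \(f^*\bigl(b_K(p)\bigr) = b_H(x)\) with \(x = f_1^{-1}(p)\) is exactly the paper's terse statement that, under the identifications \(V^* \cong V\) and \(W^* \cong W\) via \(b_H\) and \(b_K\), the map \(f^*\) becomes \((0, \varphi^{-1})\) for \(\varphi = f|_{(\ker f)^{\perp}}\). You have simply unwound the identifications explicitly, with correct handling of the antilinearity and the reversed-order convention in \(H^{\vee}\) and \(K^{\vee}\).
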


\begin{proof}
	Let \(\varphi\) be the restriction of \(f\) to \((\ker f)^{\perp}\), so \(\varphi: (\ker f)^{\perp} \to W\) is a bijective isometry. Using the Hermitian forms \(H\) and \(K\) to identify \(V^*\) with \(V\) and \(W^*\) with \(W\), we have that \(f^* = (0, \varphi^{-1})\). Since the inverse of an isometry is also an isometry, the lemma follows. 
\end{proof}

\subsection{Line subbundles}
Writing \(E = J^1(L)\) and taking duals in \eqref{eq:jetseq} we obtain an exact sequence
\begin{equation*}
0 \to L^* \xrightarrow{\pi^*} E^* \to TM \otimes L^* \to 0 \,.
\end{equation*}
For simplicity of notation, we omit \(\pi^*\) and write \(L^* \subset E^*\).
Similarly, writing \(E_{1,n} = J^1(L^*)\) and taking duals in \eqref{eq:jetseq2} we obtain
\begin{equation*}
0 \to L \to E_{1,n}^* \to TM \otimes L \to 0 \,,
\end{equation*}
giving us the line subbundle \(L \subset E_{1,n}^*\).

\begin{corollary}\label{cor:restriction}
	Let \(H\) and \(K\) be as in Definitions \ref{def:H} and \ref{def:K}. Let \(H^{\vee}\) and \(K^{\vee}\) be the induced Hermitian forms on \(E^*\) and \(E_{1,n}^*\). Then the following holds:
	\begin{enumerate}[label=\textup{(\roman*)}]
		\item the restriction of \(H^{\vee}\) to the line subbundle \(L^* \subset E^*\) is \(h^{-1}\)\,;
		\item the restriction of \(K^{\vee}\) to the line subbundle \(L \subset E_{1,n}^*\) is \(h\) .
	\end{enumerate}
\end{corollary}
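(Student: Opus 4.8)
The plan is to derive both statements formally from Lemma \ref{lem:quotientisometry}, once I check that the two bundle projections are fibrewise quotient isometries. Everything below is pointwise over a fixed $p \in M$, so the relevant fibres play the role of $V$ and $W$ in that lemma. Consider first the projection $\pi \colon J^1(L) \to L$ from \eqref{eq:jetseq}, with $\ker \pi = T^*M \otimes L$. The connection $\nabla$ splits the sequence and gives an inclusion $L \hookrightarrow J^1(L)$ whose image is a complement to $\ker \pi$. By Definition \ref{def:H} the metric $H$ is block diagonal with respect to this splitting, restricting to $h$ on the $L$-summand and to $g \otimes h$ on $T^*M \otimes L$; in particular there are no cross terms, so the $L$-summand is exactly $(\ker \pi)^{\perp}$. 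On that summand $\pi$ is the identification $(L, H|_L) = (L,h) \to (L,h)$, which is precisely the quotient isometry condition.

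Applying Lemma \ref{lem:quotientisometry}, the dual map $\pi^{*} \colon L^{*} \to E^{*}$ is then an injective isometry for the induced forms $h^{\vee} = h^{-1}$ and $H^{\vee}$. Its image is, by construction, the line subbundle $L^{*} \subset E^{*}$ obtained by dualising \eqref{eq:jetseq}. Since an injective isometry carries $h^{-1}$ onto the restriction of $H^{\vee}$ to its image, I conclude $H^{\vee}|_{L^{*}} = h^{-1}$, which is statement (i).

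For (ii) I would run the identical argument for the projection $J^1(L^{*}) \to L^{*}$ from \eqref{eq:jetseq2}, now using the splitting induced by $\nabla^{*}$. By Definition \ref{def:K} the form $K$ is again block diagonal, restricting to the positive form $h^{-1}$ on the $L^{*}$-summand and to the negative definite form $-(g \otimes h^{-1})$ on $T^*M \otimes L^{*}$. This simultaneously confirms the signature $(1,n)$ and shows that the $L^{*}$-summand is the $K$-orthogonal complement of the kernel, on which the projection is an isometry; hence the projection is a quotient isometry. Dualising and applying Lemma \ref{lem:quotientisometry} yields an injective isometry $L \to E_{1,n}^{*}$ onto the stated line subbundle, so $K^{\vee}|_{L} = (h^{-1})^{\vee} = h$.

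I expect the only real point to verify is the identification of the splitting summand with the orthogonal complement of the kernel, i.e. that the connection splitting is compatible with the orthogonality defined by $H$ (resp.\ $K$). This is immediate from the block-diagonal form of the metrics in Definitions \ref{def:H} and \ref{def:K}, so once that observation is in place the corollary is a formal consequence of the duality recorded in Lemma \ref{lem:quotientisometry}.
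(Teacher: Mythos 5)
Your proof is correct and follows the same route as the paper: the paper's own argument simply observes that the projections \(E \to L\) and \(E_{1,n} \to L^*\) are fibrewise quotient isometries by the very definition of \(H\) and \(K\) (the block-diagonal structure you spell out), and then invokes Lemma \ref{lem:quotientisometry}. You have merely made explicit the verification that the connection splitting identifies the \(L\)-summand (resp.\ \(L^*\)-summand) with the orthogonal complement of the kernel, which the paper leaves implicit.
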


\begin{proof}
	By definition of the Hermitian forms \(H\) and \(K\), the vector bundle maps \(E \to L\) and \(E_{1,n} \to L^*\) are fibrewise quotient isometries. Therefore, items (i) and (ii) follow from Lemma \ref{lem:quotientisometry}\,.
\end{proof}

\subsection{Main result}
The line subbundle \(L^* \subset E^*\) defines a holomorphic section
\begin{equation}
	\sigma: M \to \P(E^*) \,.
\end{equation}
Similarly, the line subbundle \(L \subset E_{1,n}^*\) defines a holomorphic section
\begin{equation}
	\tau: M \to U_{+} \,,
\end{equation}
where \(U_{+}\) is the subset of \(\P\big(E_{1,n}^*\big)\) consisting of lines on which \(K^{\vee} > 0\). 
The fact that \(K^{\vee}\) restricted to \(L\) is \(>0\) follows from Corollary \ref{cor:restriction} (ii).

\begin{proposition}\label{prop:pullback}
	Let \(\eta_{FS}\) be the relative Fubini-Study form on \(\P(E^*)\) associated to \(H^{\vee}\) and let \(\eta_{CH}\) be the relative complex hyperbolic form on \(\P\big(E_{1,n}^*\big)\) associated to \(K^{\vee}\). Then the following holds: \textup{(i)} \(\sigma^* \eta_{FS} = \omega\);
	\textup{(ii)} \(\tau^* \eta_{CH} = \omega\).
\end{proposition}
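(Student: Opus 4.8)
The plan is to deduce both identities directly from the pullback Lemmas \ref{lem:pbfs} and \ref{lem:pbch}, applied to the dual bundles $E^*$ and $E_{1,n}^*$, by feeding in the metric restrictions already computed in Corollary \ref{cor:restriction} and then reconciling the resulting curvature forms with $\omega$ via the sign rule for dual Hermitian line bundles. Since Corollary \ref{cor:restriction} encapsulates the quotient-isometry input (through Lemma \ref{lem:quotientisometry}), no new geometric content is needed; the proof is a matter of correctly matching roles and tracking signs.

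For (i) I would invoke Lemma \ref{lem:pbfs} with the vector bundle taken to be $E^*$, the Hermitian metric taken to be $H^{\vee}$, and the line subbundle taken to be $L^* \subset E^*$, whose associated section is exactly $\sigma$. The lemma then yields $\sigma^* \eta_{FS} = -\omega_{H^{\vee}|_{L^*}}$. By Corollary \ref{cor:restriction}(i) we have $H^{\vee}|_{L^*} = h^{-1}$, so $\sigma^* \eta_{FS} = -\omega_{h^{-1}}$. It remains to recognise this as $\omega$: using the relation $\omega_{h^{-1}} = -\omega_h$ together with the fact that in the present setting the curvature form of $h$ is $\omega_h = i\op\p \log h = \omega$, the two sign flips cancel and give $\sigma^* \eta_{FS} = \omega$.

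For (ii) the argument runs in parallel, with one fewer sign to track. I would apply Lemma \ref{lem:pbch} to the vector bundle $E_{1,n}^*$, the Hermitian form $K^{\vee}$ (still of signature $(1,n)$, since passing to the dual preserves signature), and the line subbundle $L \subset E_{1,n}^*$, giving $\tau^* \eta_{CH} = \omega_{K^{\vee}|_L}$. Before applying the lemma I must check its hypothesis $K^{\vee}|_L > 0$; this is precisely Corollary \ref{cor:restriction}(ii), which both identifies $K^{\vee}|_L = h$ and guarantees that $\tau$ lands in $U_{+}$. Since $\omega_h = \omega$, I conclude $\tau^* \eta_{CH} = \omega$.

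The proof presents no genuine obstacle, so the only real care is bookkeeping: matching the roles of $(E,H,L)$ in the lemmas against the dual data $(E^*, H^{\vee}, L^*)$ and $(E_{1,n}^*, K^{\vee}, L)$, and handling the sign conventions. The single point worth stating explicitly is the double sign cancellation in (i): the minus sign produced by Lemma \ref{lem:pbfs} combines with $\omega_{h^{-1}} = -\omega_h$ to recover $+\omega$, whereas in (ii) Lemma \ref{lem:pbch} already supplies the curvature form with the correct sign.
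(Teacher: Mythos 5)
Your proposal is correct and follows essentially the same route as the paper: apply Lemma \ref{lem:pbfs} (resp.\ Lemma \ref{lem:pbch}) to the dual bundle with the line subbundle $L^*$ (resp.\ $L$), use Corollary \ref{cor:restriction} to identify the restricted metric as $h^{-1}$ (resp.\ $h$), and cancel signs via $\omega_{h^{-1}} = -\omega_h = -\omega$. The paper's proof is just a terser version of the same bookkeeping, with item (ii) left to the reader.
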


\begin{proof}
	By Corollary \ref{cor:restriction} (i) the restriction of \(H^{\vee}\) to \(L^*\) is \(h^{-1}\). By Lemma \ref{lem:pbfs}  \(\sigma^* \eta_{FS} = -\omega_{h^{-1}} = \omega\). The proof of item (ii) is similar and left to the reader.
\end{proof}

\section{Curvature} 

In this section, we compute the curvature of the Hermitian forms on \(J^1(L)\) and \(J^1(L^*)\) defined in Section \ref{sec:HK} and relate flatness to constant holomorphic sectional curvature.

\subsection{Chern connection}
Let \(E\) be a holomorphic vector bundle equipped with a non-degenerate Hermitian form \(H\). The Chern connection of \(H\) is the unique connection on \(E\) such that \(\nabla H = 0\) and \(\nabla^{0,1} = \op_E\). 

Let \(s^1, \ldots, s^r\) be a local holomorphic frame of \(E\) and let \(H^{i \bj} = H(s^i, s^j)\). Then we can write \(\nabla s^i = \theta^i_k s^k\) where \(\theta^i_k\) are \((1,0)\)-forms. The equation \(\p H^{i\bj} = \theta^i_k H^{k\bj}\) implies
\[
\theta = (\p H) \cdot H^{-1}
\]
where \(\theta\) is the matrix of \(1\)-forms whose \((i,j)\)-th entry is \(\theta^i_j\) and \(H = (H^{i\bj})\). The curvature of \(\nabla\) is the \((1,1)\)-form with values in \(\End E\) given by \(F = d^{\nabla} \circ \nabla\). In a local holomorphic frame as above, we have \(F s^i = \Omega^i_j s_j \) where \(\Omega^i_j = \op \theta^i_j\). In matrix notation,
\[
\Omega = \op \theta \,.
\]
The Hermitian form \(H\) is said to be flat if \(F \equiv 0\).

\subsection{Main result}
The main result of this section is the following.
\begin{proposition}\label{prop:curv}
	Let \(H\) and \(K\) be as in Definitions \ref{def:H} and \ref{def:K}.
	Then the following holds:
	\begin{enumerate}[label=\textup{(\roman*)}]
		\item the Hermitian metric \(H\) is flat if and only if \(g\) has chsc \(\kappa = 2\) ;
		\item the Hermitian form \(K\) is flat if and only if \(g\) has chsc \(\kappa = -2\) . 
	\end{enumerate}
\end{proposition}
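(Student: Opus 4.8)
The strategy is to compute the Chern connection and curvature of $H$ (and $K$) explicitly in a well-chosen local holomorphic frame of $J^1(L)$, and then read off when the curvature vanishes. Since the statement is local and $H$ is positive definite, I would fix a point $p \in M$ and exploit the freedom to normalize coordinates there. The plan is to choose a local holomorphic section $s$ of $L$ and holomorphic coordinates $(z^1,\ldots,z^n)$ centred at $p$ so that, at $p$, the K\"ahler metric is in \emph{normal form}: $g_{i\bj}(p)=\delta_{ij}$ and $\partial_k g_{i\bj}(p)=0$. In such a frame, $\log h$ agrees with the K\"ahler potential $\varphi$ up to an additive pluriharmonic term, and the first derivatives of $g_{i\bj}$ and of $\log h$ can be arranged to vanish at $p$. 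This reduces the computation of $F$ at $p$ to the second derivatives of $\log h$, i.e.\ to the curvature tensor $R_{i\bj k\bl}$ of $g$.

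Concretely, I would use the holomorphic frame of $J^1(L)$ given by $e_0 = j(s)$ and $e_i = j(z^i s)$ for $i=1,\ldots,n$, which is adapted to the jet exact sequence \eqref{eq:jetseq}: $\pi(e_0)=s$ spans $L$ and $e_i$ project to a frame of $T^*M\otimes L$ via $dz^i\otimes s$. The first step is to write down the Hermitian matrix $H_{\alpha\bar\beta}=H(e_\alpha,e_\beta)$ using Definition \ref{def:H}, expressing everything in terms of $h$, $g_{i\bj}$, and $\nabla = d+\partial\log h$. At $p$, in the normal frame, this matrix becomes the identity (as in the $\CP^1$ example), and its first holomorphic derivatives are controlled by $\partial_k g_{i\bj}(p)=0$. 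The second step is to compute $\theta=(\partial H)\cdot H^{-1}$ and then $\Omega=\op\theta$ at $p$; the surviving terms should be precisely the mixed second derivatives $\partial_k\partial_{\bl}(\log h)$, which by \eqref{eq:riemann} and the normalization recombine into $R_{i\bj k\bl}$ together with terms involving $g_{i\bj}g_{k\bl}$ coming from the Hermitian structure of the $L$-factor. The third step is to compare the resulting expression for $\Omega$ against the chsc condition \eqref{eq:chsc}: flatness $\Omega(p)\equiv 0$ should be equivalent to $R_{i\bj k\bl}(p)=g_{i\bj}g_{k\bl}+g_{i\bl}g_{k\bj}$, which is exactly chsc $\kappa=2$.

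I expect the main obstacle to be the bookkeeping of the \emph{off-diagonal} curvature components—those mixing the $e_0$ direction with the $e_i$ directions. Flatness requires not only that the curvature restricted to the $T^*M\otimes L$ block match the chsc tensor, but also that the components coupling the section factor to the jet factor vanish; these mixed terms involve the third derivatives of the potential and the covariant derivative $\nabla g$, and one must verify that they vanish identically as a consequence of the K\"ahler condition $\partial_k g_{i\bj}=\partial_i g_{k\bj}$ rather than imposing a new constraint. Getting the index symmetries and the interplay between the two summands of the splitting right—so that the full $(n+1)\times(n+1)$ matrix $\Omega$ vanishes exactly when \eqref{eq:chsc} holds—is the delicate part. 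For item (ii), the computation is formally identical after replacing $h$ by $h^{-1}$ and the $+$ sign by $-$ in the defining norm; the sign flip in the fibre metric of the $T^*M\otimes L^*$ factor propagates through $\theta$ and $\Omega$ and turns the chsc condition into $\kappa=-2$, so I would carry out (i) in detail and indicate that (ii) follows by the same argument \emph{mutatis mutandis}.
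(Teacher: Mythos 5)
Your plan follows essentially the same route as the paper's proof: normal coordinates at \(p\), a holomorphic section normalizing \(h\) to second order, the adapted frame \(j(s), j(z^1 s), \ldots, j(z^n s)\) of \(J^1(L)\), verification that the Hermitian matrix is the identity with vanishing first derivatives at \(p\) so that the curvature reduces to \(\op\p H\), and identification of the surviving \((i,j)\)-block with \(R_{i\bj k\bl} - (g_{i\bj}g_{k\bl}+g_{i\bl}g_{k\bj})\), the mixed \((0,i)\)-components vanishing automatically from the normalization. The only detail to make explicit when executing the plan is that you must also kill the pure second derivatives \(\p_i\p_j h(p)\) (by absorbing the holomorphic quadratic part into the gauge \(s \mapsto e^f s\)), which is exactly what the paper does and what your remark about the pluriharmonic ambiguity of \(\log h\) already points to.
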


\begin{proof}
	(i) Let \(\tn\) be the Chern connection of \(H\) on \(J^1(L)\). We use the tilde to distinguish \(\tn\) from the connection \(\nabla\) of \((L, h)\). We want to calculate the curvature of \(\tn\) at an arbitrary fixed point \(p \in M\). \newpar
	
	\noindent\textbf{Step 1: local frame.}
	Let \((z^1, \ldots, z^n)\) be normal coordinates for the K\"ahler metric \(g\) centred at \(p\). With this choice, the metric coefficients of \(g\) satisfy
	\begin{equation}
	g_{i\bj} (0) = \delta_{ij} \,\, \text{ and } \,\, \p_k g_{i\bj} (0) = \p_{\bk} g_{i\bj} (0) = 0 \,.
	\end{equation}
	
	It is standard and easy to show (c.f. \cite[Lemma A.2]{jmr}) that we can take a holomorphic section \(s\) of \(L\) near \(p\) such that \(h = |s|_h^2\) satisfies
	\begin{equation}\label{eq:h}
	h(0) = 1 \,,\, dh(0) = 0 \,, \textup{ and } \,\, (\p_i \p_j h) (0) = (\p_{\bi} \p_{\bj} h) (0) = 0 
	\end{equation}
	for all \(1 \leq i, j \leq n\).
	Trivialize \(L\) so that the section \(s\) corresponds to the constant function \(1\) and  let \(s^0, \ldots, s^n\) be the holomorphic frame of \(J^1(L)\) given by
	\begin{equation}
	s^0 = j(1)\,,\, s^1 = j(z^1) \,,\, \ldots \,,\, s^n = j(z^n) \,.
	\end{equation} 
	
	\noindent\textbf{Step 2: calculation of metric coefficients.}
	Write \(H^{a\bar{b}} = H(s^a, s^b)\) for \(0 \leq a, b \leq n\) and let \(H\)  be the \((n+1) \times (n+1)\) matrix 
	\[
	H = (H^{a\bar{b}}) \,.
	\]
	
	In our local trivialization of \(L\), we have
	\[
	H(j(u), j(v)) = h \cdot (u \bar{v}) \,+\, h \cdot \langle \nabla u , \nabla v \rangle_{g} \,\,,
	\]
	where \(u\) and \(v\) are holomorphic functions and \(\nabla u\) is the \(1\)-form 
	\[
	\nabla u = du + u \cdot \p \log h
	\] 
	(and similarly for \(\nabla v\)) while the product \(\langle \alpha, \beta \rangle_g\) of the \(1\)-forms \(\alpha = \alpha_p dz^p\) and \(\beta = \beta_q dz^q\) is given by
	\[
	\langle \alpha, \beta \rangle_g = g^{p\bq} \alpha_p \overline{\beta_q} \,.
	\]
	Taking \(u = v = 1\), we get
	\begin{equation}\label{eq:H00}
	H^{0\bar{0}} = h \,+\, h^{-1}  |\p h|^2_{g} \,\,,
	\end{equation}
	where \(|\p h|^2_{g} = g^{p\bq} (\p_p h) (\p_{\bq} h)\).
	Taking \(u = z^i\) and \(v = 1\),
	we get
	\begin{equation}\label{eq:Hi0}
	H^{i\bar{0}} = hz^i \,+\, h^{-1} z^i |\p h|^2_{g}  \,+\,  g^{i\bq} (\p_{\bq} h) \,\,.
	\end{equation}
	Taking \(u = z^i\) and \(v=z^j\), we get
	\begin{equation}\label{eq:Hij}
	H^{i\bj} = 
	h z^i \bz^j  \,+\,
	hg^{i\bj} \,+\,  
	z^i g^{p\bj} (\p_p h) \,+\,
	\bz^j g^{i\bq} (\p_{\bq}h) \,+\,
	z^i \bz^j h^{-1} |\p h|^2_{g}  \,\,.
	\end{equation}

	\begin{claim}\label{claim:H0}
		The matrix \(H\) is equal to the identity at the origin.
	\end{claim}
	\begin{proof}[Proof of Claim 1]
		Follows immediately from Equations \eqref{eq:H00}, \eqref{eq:Hi0}, and \eqref{eq:Hij}.
	\end{proof}

	\noindent\textbf{Step 3: derivatives of metric coefficients.}
	To calculate \(dH\) we begin by noticing that \(\omega = - i \ddb h\) at the origin. Indeed, since \(\omega = - i \ddb \log h\), we have
	\begin{equation}\label{eq:gh}
		g_{k\bl} = -\p_k \p_{\bl} \log h = -\p_k \big(h^{-1} \p_{\bl} h \big) 
		= h^{-2} (\p_k h) (\p_{\bl}h) - \p_k \p_{\bl}  h \,.
	\end{equation}
	Evaluating at \(0\) we obtain
	\begin{equation}\label{eq:secondorderh}
	(\p_k \p_{\bl} h) (0) = -\delta_{kl} \,.
	\end{equation}
	
	\begin{claim}\label{claim:dH0}
		The matrix \(dH\) vanishes at the origin.
	\end{claim}
	\begin{proof}[Proof of Claim 2  ]
		Consider the coefficient \(H^{a\bar{b}} = H^{i\bar{0}}\) for \(1 \leq i \leq n\).
		Taking the derivative of Equation \eqref{eq:Hi0} with respect to \(z^k\) and evaluating at \(0\) gives us
		\[
		\p_k (H^{i\bar{0}}) = \delta^i_k \,+\, g^{i\bq} (\p_k \p_{\bq}h) \,\,.
		\] 
		Using Equation \eqref{eq:secondorderh} we get \(g^{i\bq} (\p_k \p_{\bq}h) = - \delta^{iq} \delta_{kq} = - \delta^i_k\), hence \(\p_k (H^{i\bar{0}}) = 0\).
		Similarly, 
		\(\p_{\bl} (H^{i\bar{0}}) =  \p_{\bl}\p_{\bi} h\) vanishes by Equation \eqref{eq:h}.
		The proofs that \(dH^{0\bar{0}}\) and \(dH^{i\bj}\) vanish at the origin  are easier and left to the reader. 
	\end{proof}

To calculate \(\ddb H\) at \(0\) we begin by noticing that
\begin{equation}\label{eq:thirdorderh}
(\p_k \p_{\bl} \p_p h) (0) = (\p_k \p_{\bl} \p_{\bq} h) (0) = 0 \,,
\end{equation}
as follows by differentiating Equation \eqref{eq:gh} and evaluating at \(0\).

\begin{claim}\label{claim:ddbH}
	(i) If \(a = 0\) or \(b=0\) then \((\p_k \p_{\bl} H^{a\bar{b}})(0) = 0\). (ii) 
	\begin{equation}\label{eq:Hijkl}
	(\p_k \p_{\bl}H^{i\bj}) (0) = (\p_k \p_{\bl}g^{i\bj})(0) - \left(\delta^{ij} \delta_{k\ell} + \delta^i_{k} \delta^j_{\ell} \right) \,.
	\end{equation}
\end{claim}

\begin{proof}[Proof of Claim 3]
	(i) Differentiating \eqref{eq:H00} and evaluating at the origin, 
	\[
	\p_k \p_{\bl} H^{0\bar{0}} = \p_k \p_{\bl}h \,+\, g^{p \bq}  (\p_p \p_{\bl} h) (\p_k \p_{\bq} h) 
	= -\delta_{k\ell} + \delta_{k\ell} = 0 \,.
	\]
	Differentiating \eqref{eq:Hi0} and evaluating at the origin, 
	\[
	\p_k \p_{\bl} H^{i\bar{0}} = g^{i\bq} (\p_k \p_{\bl} \p_{\bq} h) = \p_k \p_{\bl} \p_{\bi} h = 0 \,,
	\]
	where the last equality follows by Equation \eqref{eq:thirdorderh}.
	(ii) Differentiating \eqref{eq:Hij} and evaluating at the origin, 
	\[
	\begin{aligned}
	\p_k \p_{\bl}H^{i\bj} &= \p_k \p_{\bl} (z^i \bz^j h) +  \p_k \p_{\bl} (hg^{i\bj}) +  \p_k \p_{\bl} \left(\bz^j g^{i\bp} (\p_{\bp}h) \,+\, z^i g^{p\bj} (\p_p h) \right) \\
	&= \delta^i_{k} \delta^j_{\ell} -  \delta_{k\ell}\delta^{ij} + \p_k \p_{\bl}g^{i\bj} -2 \delta^i_{k} \delta^j_{\ell} 
	\end{aligned}
	\]
	from which Equation \eqref{eq:Hijkl} follows.
\end{proof}

\noindent \textbf{Final step: calculation of curvature.}
Since \(H(0)\) is the identity and \(dH(0) = 0\), the curvature of \(\tn\) at \(0\) is given by \(F = \op \p H\). By Claim \ref{claim:ddbH} (i) the \((a,b)\)-th entry of \(F\) is zero if either \(a=0\) or \(b=0\).
By Equation \eqref{eq:Hijkl}, \(F  = 0\) if and only if for all \(i, j, k, \ell \in \{1, \ldots, n\}\) we have
\[
(\p_k \p_{\bl}g^{i\bj})(0) = \delta^{ij} \delta_{k\ell} + \delta^i_{k} \delta^j_{\ell}  \,.
\]
On the other hand,
\begin{equation*}
(\p_k \p_{\bl} g^{i\bj}) (0) = -(\p_k \p_{\bl} g_{j \bi}) (0) = R_{j\bi k \bl}(0) \,,
\end{equation*}
where \(R\) is the Riemann curvature tensor of \(g\); the reversal in the order of the \(i, j\) indices is due to the fact that \((g^{i\bj})\) is the inverse transpose of \((g_{i\bj})\).
We conclude that \(F = 0\) if and only if
\[
R_{i \bj k \bl} = g_{i \bj} g_{k\bl} + g_{k\bj} g_{i\bl} \,\,,
\]
or equivalently, if and only if \(g\) has chsc \(\kappa = 2\).
This finishes the proof of Proposition \ref{prop:curv} (i). 

The proof of Proposition \ref{prop:curv} (ii) is analogous, replacing \(h\) with \(h^{-1}\) and introducing negative signs as appropriate, so we will be brief. We trivialize \(J^1(L^*)\) near \(p\) with sections \(t^0 = j(s^*)\) and \(t^i = j(z^i s^*)\), where \(s\) and \(z^i\) are as in Step 1 and \(s^*(s) = 1\).
Let \(K\) be the matrix with entries \(K^{a \bar{b}} = K(t^a, t^b)\). Same as before, \(K(0) = \) identity, \(dK(0) = 0\), and \((\p_k \p_{\bl} K^{a\bar{b}})(0) = 0\) if \(a =0\) or \(b=0\).
Moreover,
\[
K^{i\bj} = 
h^{-1} z^i \bz^j  \,-\,
h^{-1}g^{i\bj} \,-\,  
z^i g^{p\bj} (\p_p h^{-1}) \,-\,
\bz^j g^{i\bq} (\p_{\bq}h^{-1}) \,+\,
O(|z|^4)
\]
and, at the origin,
\[
\begin{aligned}
\p_k \p_{\bl}K^{i\bj} 
&= \delta^i_{k} \delta^j_{\ell} -  \delta_{k\ell}\delta^{ij} - \p_k \p_{\bl}g^{i\bj} -2 \delta^i_{k} \delta^j_{\ell} \\
&= - R_{j\bi k \bl} - \big( g_{j\bi} g_{k\bl} + g_{k\bi} g_{j\bl} \big) \,.
\end{aligned} 
\]
We conclude that the curvature of \(K\) at \(p\) -which is equal to \(\op \p K\)- vanishes if and only if \(g\) has chsc \(\kappa = -2\).
\end{proof}

\section{Proof of Theorem \ref{thm}}
Let \(\omega = i \ddb \varphi\) be a K\"ahler metric on the unit ball \(B \subset \C^n\) that has chsc \(\kappa = 2\). The K\"ahler potential \(\varphi\) defines a Hermitian metric \(h= |1|_h^2 \) on the trivial line bundle \(L = \mO\) with \(h = \exp(-\varphi)\) that has curvature form \(\omega_h = i \op \p \log h = \omega\). 
Let \(H\) be the Hermitian metric on \(E = J^1(\mO)\) induced by \(h\) as in Definition \ref{def:H}. 

By Proposition \ref{prop:curv}(i) the Chern connection \(\tn\) of \(H\) is flat. 
By the Frobenius Theorem, we can find a frame of parallel sections
of \(E\) on \(B\), say
\[
e = (e_0, e_1, \ldots, e_n) \,.
\] 
Since \(\tn^{0,1} = \op_E\), the frame \(e\) is holomorphic. Since \(\tn H = 0\), the products \(H(e_i, e_j)\) are constant. Thus, after composing with a constant linear transformation, we can assume that the frame \(e\) is orthonormal.

Using the dual frame, we identify \(\P(E^*)\) with the trivial bundle \(B \times \CP^n\) and write \(\pi_2 : \P(E^*) \to \CP^n\) for the projection to the fibre factor. The relative Fubini-Study form on \(\P(E^*)\) associated to \(H^{\vee}\) is then given by
\[
\eta_{FS} = \pi_2^*(\omega_{FS}) \,,
\]
where \(\omega_{FS}\) is the Fubini-Study metric on \(\CP^n\). 

Let \(\sigma: B \to \P(E^*)\) be the section defined by \(L^* \subset E^*\) and let
\[
f = \pi_2 \circ \sigma \,.
\]
The function \(f: B \to \CP^n\) is clearly holomorphic and
\[
f^* \omega_{FS} = \sigma^* \big( \pi_2^* \omega_{FS} \big) = \sigma^* \eta_{FS} = \omega \,,
\]
where the last equality follows from Proposition \ref{prop:pullback} (i). This completes the proof of Theorem \ref{thm} (i). The proof of item (ii) is analogous, with the obvious modifications, and is therefore omitted.

\bibliographystyle{alpha}
\bibliography{refs}	

\vspace{2em}
\begin{flushright}
\AuthorInfo	
\end{flushright}

\end{document}